\title{Induced universal graphs for families of small graphs}
\author{
        James Trimble \\
                School of Computing Science\\
        University of Glasgow, Glasgow, Scotland \\
        \href{mailto:j.trimble.1@research.gla.ac.uk}{j.trimble.1@research.gla.ac.uk}
}
\date{\today}
\crefname{algorithm}{Algorithm}{Algorithms}
\Crefname{algorithm}{Algorithm}{Algorithms}
\crefname{algocf}{Algorithm}{Algorithms}
\Crefname{algocf}{Algorithm}{Algorithms}
\crefname{figure}{Figure}{Figures}
\Crefname{figure}{Figure}{Figures}
\crefname{table}{Table}{Tables}
\Crefname{table}{Table}{Tables}
\crefname{section}{Section}{Sections}
\Crefname{section}{Section}{Sections}
\newcommand{\calF}{\ensuremath{\mathcal{F}}}
\newcommand{\calG}{\ensuremath{\mathcal{G}}}
\newcommand{\AlgVar}[1]{\mathit{#1}}
\newcommand{\lineref}[1]{line~\ref{#1}}
\newcommand{\linerangeref}[2]{lines~\ref{#1} to~\ref{#2}}
\newcommand{\Lineref}[1]{Line~\ref{#1}}
\algnewcommand{\LeftComment}[1]{\(\triangleright\) #1}
\newtheorem{proposition}{Proposition}
\definecolor{uofguniversityblue}{rgb}{0, 0.219608, 0.396078}
\definecolor{uofgheather}{rgb}{0.356863, 0.32549, 0.490196}
\definecolor{uofgaquamarine}{rgb}{0.603922, 0.72549, 0.678431}
\definecolor{uofgslate}{rgb}{0.309804, 0.34902, 0.380392}
\definecolor{uofgrose}{rgb}{0.823529, 0.470588, 0.709804}
\definecolor{uofgmocha}{rgb}{0.709804, 0.564706, 0.47451}
\definecolor{uofglawn}{rgb}{0.517647, 0.741176, 0}
\definecolor{uofgcobalt}{rgb}{0, 0.615686, 0.92549}
\definecolor{uofgturquoise}{rgb}{0, 0.709804, 0.819608}
\definecolor{uofgsunshine}{rgb}{1.0, 0.862745, 0.211765}
\definecolor{uofgpumpkin}{rgb}{1.0, 0.72549, 0.282353}
\definecolor{uofgthistle}{rgb}{0.584314, 0.070588, 0.447059}
\definecolor{uofgpillarbox}{rgb}{0.701961, 0.047059, 0}
\definecolor{uofglavendar}{rgb}{0.356863, 0.301961, 0.580392}
\definecolor{uofgsandstone}{rgb}{0.321569, 0.278431, 0.231373}
\definecolor{uofgforest}{rgb}{0, 0.317647, 0.2}
\definecolor{uofgburgundy}{rgb}{0.490196, 0.133333, 0.223529}
\definecolor{uofgrust}{rgb}{0.603922, 0.227451, 0.023529}
\begin{document}

\maketitle
\begin{abstract}
We present exact and heuristic algorithms that find, for a given family
of graphs, a graph that contains each member of the family as an induced
subgraph.
For $0 \leq k \leq 6$, we give the minimum number of vertices $f(k)$ in a graph containing all
$k$-vertex graphs as induced subgraphs, and show
that $16 \leq f(7) \leq 18$.  For $0 \leq k \leq 5$, we also give the counts of
such graphs, as generated by brute-force computer search.  We give additional
results for small graphs containing all trees on $k$ vertices.
\end{abstract}

\section{Introduction}

Given a collection $\calF$ of graphs, graph $G$ is \emph{induced universal} for
$\calF$ if and only if each element of $\calF$ is an induced subgraph of $G$.
Graph $G$ is a \emph{minimal} induced universal graph for $\calF$ if it has as
few vertices as possible.  The problem of finding a minimal induced universal graph
of a family of graphs generalises the minimum common supergraph problem
\cite{DBLP:journals/computing/BunkeJK00}, which applies only to families
containing exactly two graphs.

We write $\calF(k)$ to denote the family of all graphs on $k$ vertices,
and we write $f(k)$ to denote the order (that is, the
number of vertices) of a minimal induced universal graph for $\calF(k)$.
We write $F(k)$ to denote the number of non-isomorphic graphs of order $f(k)$
that are induced universal for $\calF(k)$.
To give an example, we have $f(3)=5$ and $F(3)=5$; all five of the minimal
induced universal graphs
for $\calF(3)$ are shown in \Cref{fig:univ3a}. \Cref{fig:univ3b} shows the four
graphs in $\calF(3)$ as induced subgraphs of a single induced universal graph.

Moon showed that $f(k) \leq 2^{(k-1)/2}$
\cite{moon_1965}, and Alon showed that $f(k) = (1 + o(1))2^{(k-1)/2}$
\cite{alon2017asymptotically}.  There is an extensive literature on bounds on
the order of minimal induced universal subgraphs for many families of graphs;
see the references in \cite{alon2017asymptotically}.
However, to my knowledge the only
existing systematic attempt to find \emph{exact} values for families of small
graphs is an answer by James Preen on the Mathematics Stack Exchange website that
presents results for families of small connected graphs obtained by brute-force
search with the Maple programming language \cite{preen_math_se}.

\begin{figure}[htb]
    \centering

    \subfigure[][The five induced universal graphs of order 5 for \calF(3) (that is, for the family of all 3-vertex graphs)] {
      \begin{tikzpicture}[>=latex',line join=bevel,scale=.4]
        \graph [nodes={draw, circle, minimum width=.49cm, inner sep=1pt}, circular placement, radius=0.78cm,
                clockwise=5] {
                    0,1,2,3,4;
           3 -- 0;
           4 -- 0;
           4 -- 1;
           4 -- 3;
        };
      \end{tikzpicture}
      \qquad
      \begin{tikzpicture}[>=latex',line join=bevel,scale=.4]
        \graph [nodes={draw, circle, minimum width=.49cm, inner sep=1pt}, circular placement, radius=0.78cm,
                clockwise=5] {
                    0,1,2,3,4;
          3 -- 0;
          4 -- 0;
          4 -- 1;
          4 -- 2;
          4 -- 3;
        };
      \end{tikzpicture}
      \qquad
      \begin{tikzpicture}[>=latex',line join=bevel,scale=.4]
        \graph [nodes={draw, circle, minimum width=.49cm, inner sep=1pt}, circular placement, radius=0.78cm,
                clockwise=5] {
                    0,1,2,3,4;
          3 -- 0;
          3 -- 1;
          4 -- 0;
          4 -- 2;
          4 -- 3;
        };
      \end{tikzpicture}
      \qquad
      \begin{tikzpicture}[>=latex',line join=bevel,scale=.4]
        \graph [nodes={draw, circle, minimum width=.49cm, inner sep=1pt}, circular placement, radius=0.78cm,
                clockwise=5] {
                    0,1,2,3,4;
          3 -- 0;
          3 -- 1;
          4 -- 0;
          4 -- 1;
          4 -- 3;
        };
      \end{tikzpicture}
      \qquad
      \begin{tikzpicture}[>=latex',line join=bevel,scale=.4]
        \graph [nodes={draw, circle, minimum width=.49cm, inner sep=1pt}, circular placement, radius=0.78cm,
                clockwise=5] {
                    0,1,2,3,4;
          3 -- 0;
          3 -- 1;
          4 -- 0;
          4 -- 1;
          4 -- 2;
          4 -- 3;
        };
      \end{tikzpicture}
      \label{fig:univ3a}
    }

    \par\bigskip
    \subfigure[][A demonstration that the first graph in \Cref{fig:univ3a} is induced universal
            for $\calF(3)$.  For each graph $G$ in $\calF(3)$ ($I_3$, $K_3$, $P_3$, and
            a graph with a single edge), an induced subgraph isomorphic to $G$ is shown in a
            single color.] {
      \begin{tikzpicture}[>=latex',line join=bevel,scale=.4]
        \graph [nodes={draw, circle, minimum width=.49cm, inner sep=1pt}, circular placement, radius=0.78cm,
                clockwise=5] {
                    0,1[fill=uofgpumpkin],2[fill=uofgpumpkin],3[fill=uofgpumpkin],4;
           3 -- 0;
           4 -- 0;
           4 -- 1;
           4 -- 3;
        };
      \end{tikzpicture}
      \qquad
      \begin{tikzpicture}[>=latex',line join=bevel,scale=.4]
        \graph [nodes={draw, circle, minimum width=.49cm, inner sep=1pt}, circular placement, radius=0.78cm,
                clockwise=5] {
                    0[fill=uofgturquoise],1,2,3[fill=uofgturquoise],4[fill=uofgturquoise];
           4 -- 1;
           3 --[uofgturquoise, ultra thick] 0;
           4 --[uofgturquoise, ultra thick] 0;
           4 --[uofgturquoise, ultra thick] 3;
        };
      \end{tikzpicture}
      \qquad
      \begin{tikzpicture}[>=latex',line join=bevel,scale=.4]
        \graph [nodes={draw, circle, minimum width=.49cm, inner sep=1pt}, circular placement, radius=0.78cm,
                clockwise=5] {
                    0,1[fill=uofgrose],2,3[fill=uofgrose],4[fill=uofgrose];
           3 -- 0;
           4 -- 0;
           4 --[uofgrose, ultra thick] 1;
           4 --[uofgrose, ultra thick] 3;
        };
      \end{tikzpicture}
      \qquad
      \begin{tikzpicture}[>=latex',line join=bevel,scale=.4]
        \graph [nodes={draw, circle, minimum width=.49cm, inner sep=1pt}, circular placement, radius=0.78cm,
                clockwise=5] {
                    0,1[fill=uofgmocha],2[fill=uofgmocha],3,4[fill=uofgmocha];
           3 -- 0;
           4 -- 0;
           4 --[uofgmocha, ultra thick] 1;
           4 -- 3;
        };
      \end{tikzpicture}
      \label{fig:univ3b}
    }
\caption{Minimal induced universal graphs for the family of all graphs on 3 vertices}
\label{fig:univ3}
\end{figure}

This paper uses a brute-force approach similar to that of Preen to find minimal
induced universal graphs for families of small graphs.  Our contributions
are (1) an ordering strategy to reduce the number of subgraph isomorphism
calls required by the brute force algorithm; (2) exact values for $f(k)$ $(5 \leq k \leq 6)$
and $F(k)$ $(1 \leq k \leq 5)$; (3) an upper bound of 18 for $f(7)$; (4) a hill-climbing
search algorithm to find small (but not necessarily optimal) induced universal graphs
(5) exact values of the order of a minimal induced universal graph
for the families of all trees on $k$ vertices, for $1 \leq k \leq 8$.

The sequel is structured as follows.
Section~\ref{sec:method} describes the basic brute force method.
Section~\ref{sec:iteration-order} compares four methods for sorting
the graphs in $\calF$, and compares their effect on the number of subgraph
isomorphism calls made by the brute-force algorithm.
Sections~\ref{sec:results5} presents exact
values of $f(k)$ and $F(k)$ for $k \leq 5$.
Section~\ref{sec:f6} gives the value of $f(6)$ and and
Section~\ref{sec:f7} gives bounds on $f(7)$; in each of these sections, the lower
bound is proven and a graph obtained by heuristic search demonstrating the
upper bound is given.  Finally, Section~\ref{sec:trees} describes a specialised
algorithm for families of trees, and gives results for
these families.

\section{Generating all induced universal graphs}\label{sec:method}

Given $\calF$ and $n$,
we use the brute-force approach
shown in \Cref{alg:brute-force} --- which is essentially the same as the method
described by Preen \cite{preen_math_se} ---
to find the set of
$n$-vertex graphs that are induced universal for the family $\calF$.
The entry point is the function $\FuncSty{AllInducedUniversalGraphs}()$.
\Lineref{TryEachG} tests in turn each candidate graph $G$ from the family
of $n$-vertex graphs, and adds to the collection $\calG$ those that are induced universal
for $\calF$.  The function $\FuncSty{IsInducedUniversal}()$ tests
whether a graph $G$ is induced universal for $\calF$ by checking for
each $H \in \calF$ that $H$ is isomorphic to an induced subgraph of $G$.

For the collection $\calF(n)$ of candidate graphs on \lineref{TryEachG}, our
implementation uses graphs generated using Brendan McKay's \texttt{geng}
program \cite{DBLP:journals/jal/McKay98}.\footnote{The program \texttt{geng} is distributed
with Nauty; we used version 27.r3.\url{https://pallini.di.uniroma1.it/}}
These are read in \texttt{graph6}
format\footnote{\url{https://users.cecs.anu.edu.au/~bdm/data/formats.html}}
from a text file as the program proceeds, and therefore only one candidate
graph at a time needs to be stored in memory.

Our implementation is written purely in Python and run using the CPython
interpreter.
On \lineref{CallSubIso}, the function $\FuncSty{InducedSubIso}()$
calls an algorithm for the induced
subgraph isomorphism decision problem; for this,
our program uses an implementation by the author of the McSplit
algorithm \cite{DBLP:conf/ijcai/McCreeshPT17}.
McSplit was designed for
the more general maximum common induced subgraph problem,
but it can be used for induced subgraph isomorphism with a simple
modification: we backtrack when the calculated upper bound
on the order of a common subgraph is less the order of the smaller
of the two input graphs.
This method for solving
the subgraph isomorphism problem is very fast if both input graphs
are small, as the graphs considered in this paper are.
While McSplit is suitable for our purposes in this paper,
we do not claim that it is the fastest induced subgraph isomorphism
solver for very small graphs; it would be useful future work to perform
an experimental comparison with other subgraph isomorphism solvers.

\begin{algorithm}[h!]
\DontPrintSemicolon
\nl $\FuncSty{IsInducedUniversal}(\calF,G)$ \;
\nl \KwData{A family of graphs $\calF$ and a graph $G$.}
\nl \KwResult{A boolean value indicating whether $G$ is induced universal for $\calF$.}
\nl \Begin{
\nl    \For {$H \in \calF$ \label{HLoop}}{
\nl      \lIf {$\neg\FuncSty{InducedSubIso}(H, G)$\label{CallSubIso}}{$\KwSty{return}$~$\AlgVar{false}$}
       }
\nl   $\KwSty{return}$~$\AlgVar{true}$ \;
    }
\medskip
\nl $\FuncSty{AllInducedUniversalGraphs}(\calF,n)$ \label{BruteForceFun} \;
\nl \KwData{A family of graphs $\calF$ and a natural number $n$.}
\nl \KwResult{The set of all graphs of order $n$ that are induced universal for $\calF$.}
\nl \Begin{
\nl    $\calG \gets \emptyset$ \;
\nl    \For {$G \in \calF(n)$}{
\nl      \lIf {$\FuncSty{IsInducedUniversal}(\calF,G)$\label{TryEachG}}{$\calG \gets \calG \cup \{G\}$}
       }
\nl   $\KwSty{return}$~$\calG$ \;
    }
\caption{A brute-force algorithm for finding all order-$n$ induced universal subgraphs of a family $\calF$ of graphs.
	The entry point is $\FuncSty{AllInducedUniversalGraphs}(\calF,n)$.}
\label{alg:brute-force}
\end{algorithm}

The full set of experiments described in this paper, run sequentially,
took less than a day to complete on a laptop with an Intel Core i5-6200U CPU
and 8 GB RAM.\footnote{The code and
results from this paper, including lists of minimal induced universal graphs
in graph6 format, are available from
\url{https://github.com/jamestrimble/small-universal-graphs}}

\section{Iteration order for $\calF$}\label{sec:iteration-order}

When determining on \lineref{HLoop} of \Cref{alg:brute-force} whether a graph $G$ contains
every element of $\calF$ as an induced subgraph, are some iteration orders of
$\calF$ better than others?  For a graph $G$ that contains a copy of each
element of $\calF$, the iteration order is of no importance;
every element of $\calF$ must be checked before $\AlgVar{true}$ is returned.
For a graph $G$ that
does \emph{not} contain a copy of each element of $\calF$, however, we would like
to iterate over $\calF$ in an order such that graphs that are likely to fail the
subgraph isomorphism test are checked early; that way, we can quickly return
$\AlgVar{false}$ and avoid unnecessary calls to the subgraph isomorphism algorithm.

We begin by considering the case where our family of graphs $\calF$
is $\calF(k)$ for some $k$; that is,
$\calF$ is the set of all graphs of order $k$.
As Diaconis and Chatterjee note, \cite{chatterjee2021isomorphisms}, the complete
graph $K_n$ is the `hardest' graph of order $n$ for a random graph to contain.
This led me
to consider two orderings in which $K_k$ and its complement $I_k$ come
first.  The first of these approaches is to sort $\calF(k)$ in descending
order of automorphism count, so that
graphs with large automorphism groups appear first in the list.  (Intuitively, if a graph
has few automorphisms then we can generate many different labelled graphs
by permuting the vertex labels.  This gives more `opportunities' for the
graph to be an induced subgraph of $G$.)
The second approach considered is to place graphs
with unusually high or low edge counts, as measured by the absolute value of
${2|E(G)| - {|V(G)| \choose 2}}$, first.

\Cref{fig:scatter} examines whether graphs that appear early in the orderings
of $\calF$
generated by each of these two strategies are contained in few graphs with
a given number of vertices, as we would hope to be the case.
For this experiment, we let $\calF = \calF(5)$, and we consider the problem
of finding induced universal graphs of order 8.
Each point on the plots
represents one of the 34 graphs of order 5 (fewer than 34 points are visible
due to exactly-overlapping points, as we would expect).  On the vertical
axis, we have the number of graphs in $\calF(8)$ containing $G$; we would like
graphs for which this value is small to appear early in our order.
On the horizontal axes we have each of our two measures: size of automorphism group
in the first plot and `extremeness' of edge count in the second plot.
As we can see from the first plot, the automorphism measure has a strong correlation
with the number of graphs containing $G$ as an induced subgraph, suggesting that
sorting by this measure could be a good strategy for reducing the number of
subgraph isomorphism calls in \Cref{alg:brute-force}.

\begin{figure}[htb]
    \centering

      \includegraphics*{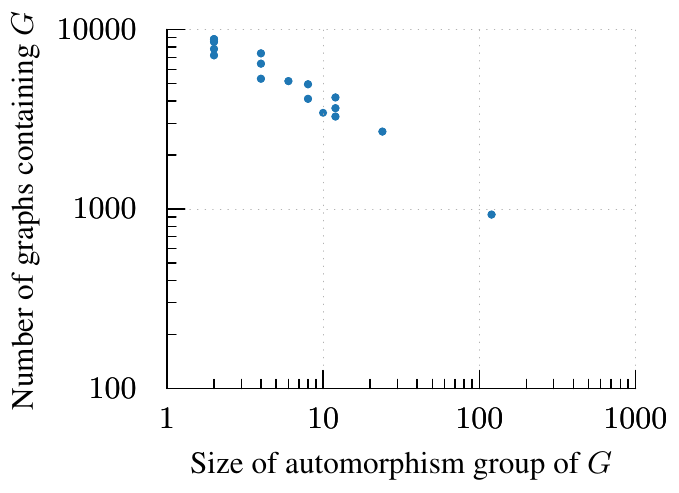}
     \qquad
      \includegraphics*{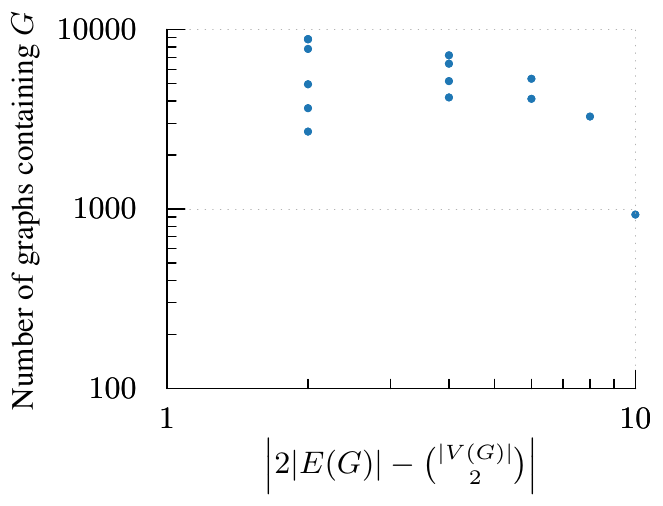}
    \caption{For each graph $G \in \calF(5)$, we plot on the vertical axis
    the number of graphs in $\calF(8)$ that contain $G$ as an induced subgraph.
    This value has a strong negative correlation with the size of the automorphism
    group of $G$ (left plot), but a weaker correlation with a measure of how
    extreme the edge count of $G$ is (right plot).
    All axis scales are logarithmic.}
\label{fig:scatter}
\end{figure}

\subsection{Testing the strategies: $\calF = \calF{5}$}

We now test whether each of our ordering strategies results, as hoped,
in a reduced number of calls to the subgraph isomorphism function
in \Cref{alg:brute-force}.  As our first test case, we consider 
$\FuncSty{AllInducedUniversalGraphs}(\calF(5),9)$ ---
finding all induced universal graphs of order 9 for the family of
all graphs of order 5.  We consider four strategies.

The first two of these are the automorphism-count and `edge-count extremeness'
measures described above.  The fourth strategy is a random order.
Finally, we used one additional strategy that we call ``almost random''.
Under this strategy, $I_5$ and $K_5$ are placed at the start of the list,
and the remaining 32 graphs are in random order.  This strategy was not
intended to be useful in practice, but to give insight into whether
the success of the first two strategies was purely due to having
$I_5$ and $K_5$ at the start of the list of graphs. \Cref{tab:ordering-strategies}
summarises these ordering strategies.

\begin{table}[h!]
\centering
 \begin{tabular}{p{0.25\linewidth} p{0.55\linewidth}}
 \toprule
    Name & Description of strategy \\ [0.5ex]
 \midrule
    Automorphisms & Sort in descending order of automorphism group size \\
    Edges & Sort in descending order of $\big|{2|E(G)| - {|V(G)| \choose 2}}\big|$ 
            (thus, give priority to graphs that are very sparse or very dense)\\
    Almost-random & Shuffle the list of graphs in $\calF$, then move $K_k$ and $I_k$ to the start if they are present \\
    Random & Shuffle the list of graphs in $\calF$ \\
 \bottomrule
\end{tabular}
\caption{Ordering strategies tested for the brute-force algorithm}
\label{tab:ordering-strategies}
\end{table}

For each of these strategies, we ran the program 50 times.

The results are shown in \Cref{fig:second-experiment}.  Over 50 runs,
the automorphisms strategy required a mean of 304,746 calls to the subgraph
isomorphism solver, while the edges strategy required marginally fewer
calls:  a mean of 304,701.  Surprisingly, the almost-random
strategy had the lowest average number of calls of all the strategies,
at 304,387.  The random strategy fared poorly, requiring over 800,000
calls on average.
The variance in number of calls was small for all but the random strategy,
with between 304,000 and 305,000 calls being required for each run.

\begin{figure}[h!]
    \centering

    \includegraphics*[width=0.7\textwidth]{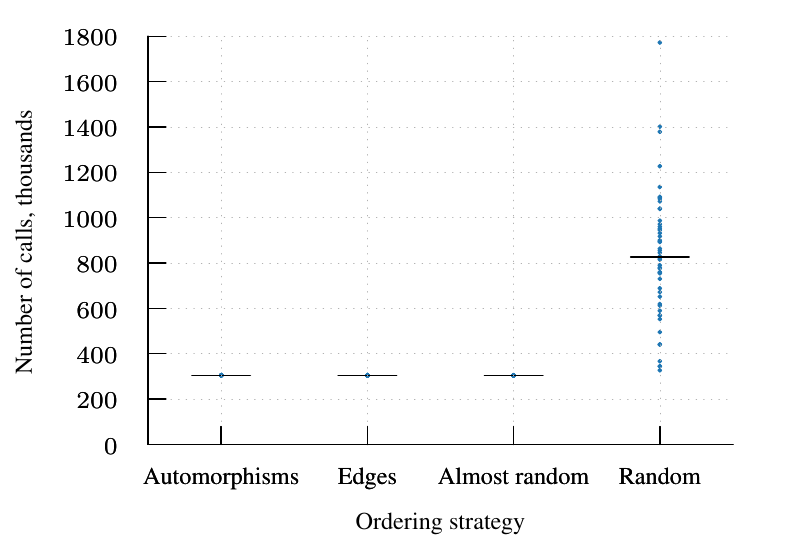}

    \caption{Using each of our four order strategies, we ran
        $\FuncSty{AllInducedUniversalGraphs}(\calF(5),9)$ 50 times. The plot
        shows the number calls to the subgraph isomorphism algorithm, in thousands.
        Horizontal lines show average (mean) number of calls.}
\label{fig:second-experiment}
\end{figure}

We note, first, that edges strategy appears to be at least as effective
as the automorphisms strategy for this instance, despite the lower
correlation observed in \Cref{fig:scatter}.  Perhaps most surprising is
that the almost-random strategy performed better than either of these two
strategies; our expectation had been that this simple strategy would
give some but not all of the benefit of the first two strategies.
We conjecture that the almost-random strategy is effective because it
quickly checks a diverse range of graphs.  Consider, for example,
\Cref{tab:graphs-by-autom-count}, which shows the graphs of order 5 listed
in descending order of automorphism count.  This is the order in which they
are checked under the automorphisms strategy (with the order within each
row chosen randomly).  Note that the two graphs in the second row are
similar to the graphs in the first row; the first graphs in each of the first
two rows, for example, have a maximum common induced subgraph of size 4.
It seems likely that if a graph contains both of the graphs in the first row of
the table as induced subgraphs, then it may also contain the graphs in the
second row. The almost-random strategy is likely to choose graphs
that are quite different from a clique and an independent set after the first
two graphs, perhaps increasing the probability that one of these graphs
will not be contained in the larger graph.

\begin{table}[h!]
\centering
\begin{tabular}{c l}
 \toprule
 Automorphisms & Graphs \\ [0.5ex]
 \midrule
120 & \begin{tikzpicture}[>=latex',line join=bevel,scale=.3]
    {
         \tikzstyle{every node}=[draw, circle, inner sep=1pt, minimum size=.1cm]
         \node (0) at (6.123233995736766e-17,1.0) {};\node (1) at (-0.9510565162951535,0.3090169943749475) {};\node (2) at (-0.5877852522924732,-0.8090169943749473) {};\node (3) at (0.5877852522924729,-0.8090169943749476) {};\node (4) at (0.9510565162951536,0.3090169943749472) {};
         \draw (0) -- (1); \draw (0) -- (2); \draw (0) -- (3); \draw (0) -- (4); \draw (1) -- (2); \draw (1) -- (3); \draw (1) -- (4); \draw (2) -- (3); \draw (2) -- (4); \draw (3) -- (4);
    };
  \end{tikzpicture}\quad\begin{tikzpicture}[>=latex',line join=bevel,scale=.3]
    {
         \tikzstyle{every node}=[draw, circle, inner sep=1pt, minimum size=.1cm]
         \node (0) at (6.123233995736766e-17,1.0) {};\node (1) at (-0.9510565162951535,0.3090169943749475) {};\node (2) at (-0.5877852522924732,-0.8090169943749473) {};\node (3) at (0.5877852522924729,-0.8090169943749476) {};\node (4) at (0.9510565162951536,0.3090169943749472) {};
         
    };
  \end{tikzpicture}\\
24 & \begin{tikzpicture}[>=latex',line join=bevel,scale=.3]
    {
         \tikzstyle{every node}=[draw, circle, inner sep=1pt, minimum size=.1cm]
         \node (0) at (6.123233995736766e-17,1.0) {};\node (1) at (-0.9510565162951535,0.3090169943749475) {};\node (2) at (-0.5877852522924732,-0.8090169943749473) {};\node (3) at (0.5877852522924729,-0.8090169943749476) {};\node (4) at (0.9510565162951536,0.3090169943749472) {};
         \draw (0) -- (2); \draw (0) -- (3); \draw (0) -- (4); \draw (2) -- (3); \draw (2) -- (4); \draw (3) -- (4);
    };
  \end{tikzpicture}\quad\begin{tikzpicture}[>=latex',line join=bevel,scale=.3]
    {
         \tikzstyle{every node}=[draw, circle, inner sep=1pt, minimum size=.1cm]
         \node (0) at (6.123233995736766e-17,1.0) {};\node (1) at (-0.9510565162951535,0.3090169943749475) {};\node (2) at (-0.5877852522924732,-0.8090169943749473) {};\node (3) at (0.5877852522924729,-0.8090169943749476) {};\node (4) at (0.9510565162951536,0.3090169943749472) {};
         \draw (0) -- (4); \draw (1) -- (4); \draw (2) -- (4); \draw (3) -- (4);
    };
  \end{tikzpicture}\\
12 & \begin{tikzpicture}[>=latex',line join=bevel,scale=.3]
    {
         \tikzstyle{every node}=[draw, circle, inner sep=1pt, minimum size=.1cm]
         \node (0) at (6.123233995736766e-17,1.0) {};\node (1) at (-0.9510565162951535,0.3090169943749475) {};\node (2) at (-0.5877852522924732,-0.8090169943749473) {};\node (3) at (0.5877852522924729,-0.8090169943749476) {};\node (4) at (0.9510565162951536,0.3090169943749472) {};
         \draw (0) -- (3); \draw (0) -- (4); \draw (3) -- (4);
    };
  \end{tikzpicture}\quad\begin{tikzpicture}[>=latex',line join=bevel,scale=.3]
    {
         \tikzstyle{every node}=[draw, circle, inner sep=1pt, minimum size=.1cm]
         \node (0) at (6.123233995736766e-17,1.0) {};\node (1) at (-0.9510565162951535,0.3090169943749475) {};\node (2) at (-0.5877852522924732,-0.8090169943749473) {};\node (3) at (0.5877852522924729,-0.8090169943749476) {};\node (4) at (0.9510565162951536,0.3090169943749472) {};
         \draw (0) -- (2); \draw (0) -- (3); \draw (0) -- (4); \draw (1) -- (2); \draw (1) -- (3); \draw (1) -- (4); \draw (2) -- (3); \draw (2) -- (4); \draw (3) -- (4);
    };
  \end{tikzpicture}\quad\begin{tikzpicture}[>=latex',line join=bevel,scale=.3]
    {
         \tikzstyle{every node}=[draw, circle, inner sep=1pt, minimum size=.1cm]
         \node (0) at (6.123233995736766e-17,1.0) {};\node (1) at (-0.9510565162951535,0.3090169943749475) {};\node (2) at (-0.5877852522924732,-0.8090169943749473) {};\node (3) at (0.5877852522924729,-0.8090169943749476) {};\node (4) at (0.9510565162951536,0.3090169943749472) {};
         \draw (0) -- (3); \draw (0) -- (4); \draw (1) -- (3); \draw (1) -- (4); \draw (2) -- (3); \draw (2) -- (4); \draw (3) -- (4);
    };
  \end{tikzpicture}\quad\begin{tikzpicture}[>=latex',line join=bevel,scale=.3]
    {
         \tikzstyle{every node}=[draw, circle, inner sep=1pt, minimum size=.1cm]
         \node (0) at (6.123233995736766e-17,1.0) {};\node (1) at (-0.9510565162951535,0.3090169943749475) {};\node (2) at (-0.5877852522924732,-0.8090169943749473) {};\node (3) at (0.5877852522924729,-0.8090169943749476) {};\node (4) at (0.9510565162951536,0.3090169943749472) {};
         \draw (0) -- (3); \draw (0) -- (4); \draw (1) -- (3); \draw (1) -- (4); \draw (2) -- (3); \draw (2) -- (4);
    };
  \end{tikzpicture}\quad\begin{tikzpicture}[>=latex',line join=bevel,scale=.3]
    {
         \tikzstyle{every node}=[draw, circle, inner sep=1pt, minimum size=.1cm]
         \node (0) at (6.123233995736766e-17,1.0) {};\node (1) at (-0.9510565162951535,0.3090169943749475) {};\node (2) at (-0.5877852522924732,-0.8090169943749473) {};\node (3) at (0.5877852522924729,-0.8090169943749476) {};\node (4) at (0.9510565162951536,0.3090169943749472) {};
         \draw (0) -- (2); \draw (0) -- (4); \draw (1) -- (3); \draw (2) -- (4);
    };
  \end{tikzpicture}\quad\begin{tikzpicture}[>=latex',line join=bevel,scale=.3]
    {
         \tikzstyle{every node}=[draw, circle, inner sep=1pt, minimum size=.1cm]
         \node (0) at (6.123233995736766e-17,1.0) {};\node (1) at (-0.9510565162951535,0.3090169943749475) {};\node (2) at (-0.5877852522924732,-0.8090169943749473) {};\node (3) at (0.5877852522924729,-0.8090169943749476) {};\node (4) at (0.9510565162951536,0.3090169943749472) {};
         \draw (0) -- (4);
    };
  \end{tikzpicture}\\
10 & \begin{tikzpicture}[>=latex',line join=bevel,scale=.3]
    {
         \tikzstyle{every node}=[draw, circle, inner sep=1pt, minimum size=.1cm]
         \node (0) at (6.123233995736766e-17,1.0) {};\node (1) at (-0.9510565162951535,0.3090169943749475) {};\node (2) at (-0.5877852522924732,-0.8090169943749473) {};\node (3) at (0.5877852522924729,-0.8090169943749476) {};\node (4) at (0.9510565162951536,0.3090169943749472) {};
         \draw (0) -- (2); \draw (0) -- (3); \draw (1) -- (3); \draw (1) -- (4); \draw (2) -- (4);
    };
  \end{tikzpicture}\\
8 & \begin{tikzpicture}[>=latex',line join=bevel,scale=.3]
    {
         \tikzstyle{every node}=[draw, circle, inner sep=1pt, minimum size=.1cm]
         \node (0) at (6.123233995736766e-17,1.0) {};\node (1) at (-0.9510565162951535,0.3090169943749475) {};\node (2) at (-0.5877852522924732,-0.8090169943749473) {};\node (3) at (0.5877852522924729,-0.8090169943749476) {};\node (4) at (0.9510565162951536,0.3090169943749472) {};
         \draw (0) -- (2); \draw (0) -- (3); \draw (0) -- (4); \draw (1) -- (2); \draw (1) -- (3); \draw (1) -- (4); \draw (2) -- (4); \draw (3) -- (4);
    };
  \end{tikzpicture}\quad\begin{tikzpicture}[>=latex',line join=bevel,scale=.3]
    {
         \tikzstyle{every node}=[draw, circle, inner sep=1pt, minimum size=.1cm]
         \node (0) at (6.123233995736766e-17,1.0) {};\node (1) at (-0.9510565162951535,0.3090169943749475) {};\node (2) at (-0.5877852522924732,-0.8090169943749473) {};\node (3) at (0.5877852522924729,-0.8090169943749476) {};\node (4) at (0.9510565162951536,0.3090169943749472) {};
         \draw (0) -- (3); \draw (0) -- (4); \draw (1) -- (3); \draw (1) -- (4);
    };
  \end{tikzpicture}\quad\begin{tikzpicture}[>=latex',line join=bevel,scale=.3]
    {
         \tikzstyle{every node}=[draw, circle, inner sep=1pt, minimum size=.1cm]
         \node (0) at (6.123233995736766e-17,1.0) {};\node (1) at (-0.9510565162951535,0.3090169943749475) {};\node (2) at (-0.5877852522924732,-0.8090169943749473) {};\node (3) at (0.5877852522924729,-0.8090169943749476) {};\node (4) at (0.9510565162951536,0.3090169943749472) {};
         \draw (0) -- (2); \draw (0) -- (4); \draw (1) -- (3); \draw (1) -- (4); \draw (2) -- (4); \draw (3) -- (4);
    };
  \end{tikzpicture}\quad\begin{tikzpicture}[>=latex',line join=bevel,scale=.3]
    {
         \tikzstyle{every node}=[draw, circle, inner sep=1pt, minimum size=.1cm]
         \node (0) at (6.123233995736766e-17,1.0) {};\node (1) at (-0.9510565162951535,0.3090169943749475) {};\node (2) at (-0.5877852522924732,-0.8090169943749473) {};\node (3) at (0.5877852522924729,-0.8090169943749476) {};\node (4) at (0.9510565162951536,0.3090169943749472) {};
         \draw (0) -- (3); \draw (1) -- (4);
    };
  \end{tikzpicture}\\
6 & \begin{tikzpicture}[>=latex',line join=bevel,scale=.3]
    {
         \tikzstyle{every node}=[draw, circle, inner sep=1pt, minimum size=.1cm]
         \node (0) at (6.123233995736766e-17,1.0) {};\node (1) at (-0.9510565162951535,0.3090169943749475) {};\node (2) at (-0.5877852522924732,-0.8090169943749473) {};\node (3) at (0.5877852522924729,-0.8090169943749476) {};\node (4) at (0.9510565162951536,0.3090169943749472) {};
         \draw (0) -- (2); \draw (0) -- (3); \draw (0) -- (4); \draw (1) -- (4); \draw (2) -- (3); \draw (2) -- (4); \draw (3) -- (4);
    };
  \end{tikzpicture}\quad\begin{tikzpicture}[>=latex',line join=bevel,scale=.3]
    {
         \tikzstyle{every node}=[draw, circle, inner sep=1pt, minimum size=.1cm]
         \node (0) at (6.123233995736766e-17,1.0) {};\node (1) at (-0.9510565162951535,0.3090169943749475) {};\node (2) at (-0.5877852522924732,-0.8090169943749473) {};\node (3) at (0.5877852522924729,-0.8090169943749476) {};\node (4) at (0.9510565162951536,0.3090169943749472) {};
         \draw (0) -- (4); \draw (1) -- (4); \draw (2) -- (4);
    };
  \end{tikzpicture}\\
4 & \begin{tikzpicture}[>=latex',line join=bevel,scale=.3]
    {
         \tikzstyle{every node}=[draw, circle, inner sep=1pt, minimum size=.1cm]
         \node (0) at (6.123233995736766e-17,1.0) {};\node (1) at (-0.9510565162951535,0.3090169943749475) {};\node (2) at (-0.5877852522924732,-0.8090169943749473) {};\node (3) at (0.5877852522924729,-0.8090169943749476) {};\node (4) at (0.9510565162951536,0.3090169943749472) {};
         \draw (0) -- (2); \draw (0) -- (3); \draw (0) -- (4); \draw (1) -- (2); \draw (1) -- (3); \draw (1) -- (4); \draw (2) -- (4);
    };
  \end{tikzpicture}\quad\begin{tikzpicture}[>=latex',line join=bevel,scale=.3]
    {
         \tikzstyle{every node}=[draw, circle, inner sep=1pt, minimum size=.1cm]
         \node (0) at (6.123233995736766e-17,1.0) {};\node (1) at (-0.9510565162951535,0.3090169943749475) {};\node (2) at (-0.5877852522924732,-0.8090169943749473) {};\node (3) at (0.5877852522924729,-0.8090169943749476) {};\node (4) at (0.9510565162951536,0.3090169943749472) {};
         \draw (0) -- (4); \draw (1) -- (4);
    };
  \end{tikzpicture}\quad\begin{tikzpicture}[>=latex',line join=bevel,scale=.3]
    {
         \tikzstyle{every node}=[draw, circle, inner sep=1pt, minimum size=.1cm]
         \node (0) at (6.123233995736766e-17,1.0) {};\node (1) at (-0.9510565162951535,0.3090169943749475) {};\node (2) at (-0.5877852522924732,-0.8090169943749473) {};\node (3) at (0.5877852522924729,-0.8090169943749476) {};\node (4) at (0.9510565162951536,0.3090169943749472) {};
         \draw (0) -- (2); \draw (0) -- (3); \draw (0) -- (4); \draw (1) -- (3); \draw (1) -- (4); \draw (2) -- (3); \draw (2) -- (4); \draw (3) -- (4);
    };
  \end{tikzpicture}\quad\begin{tikzpicture}[>=latex',line join=bevel,scale=.3]
    {
         \tikzstyle{every node}=[draw, circle, inner sep=1pt, minimum size=.1cm]
         \node (0) at (6.123233995736766e-17,1.0) {};\node (1) at (-0.9510565162951535,0.3090169943749475) {};\node (2) at (-0.5877852522924732,-0.8090169943749473) {};\node (3) at (0.5877852522924729,-0.8090169943749476) {};\node (4) at (0.9510565162951536,0.3090169943749472) {};
         \draw (0) -- (3); \draw (0) -- (4); \draw (1) -- (3); \draw (1) -- (4); \draw (3) -- (4);
    };
  \end{tikzpicture}\quad\begin{tikzpicture}[>=latex',line join=bevel,scale=.3]
    {
         \tikzstyle{every node}=[draw, circle, inner sep=1pt, minimum size=.1cm]
         \node (0) at (6.123233995736766e-17,1.0) {};\node (1) at (-0.9510565162951535,0.3090169943749475) {};\node (2) at (-0.5877852522924732,-0.8090169943749473) {};\node (3) at (0.5877852522924729,-0.8090169943749476) {};\node (4) at (0.9510565162951536,0.3090169943749472) {};
         \draw (0) -- (3); \draw (0) -- (4); \draw (1) -- (4); \draw (2) -- (4); \draw (3) -- (4);
    };
  \end{tikzpicture}\quad\begin{tikzpicture}[>=latex',line join=bevel,scale=.3]
    {
         \tikzstyle{every node}=[draw, circle, inner sep=1pt, minimum size=.1cm]
         \node (0) at (6.123233995736766e-17,1.0) {};\node (1) at (-0.9510565162951535,0.3090169943749475) {};\node (2) at (-0.5877852522924732,-0.8090169943749473) {};\node (3) at (0.5877852522924729,-0.8090169943749476) {};\node (4) at (0.9510565162951536,0.3090169943749472) {};
         \draw (0) -- (3); \draw (1) -- (4); \draw (2) -- (4);
    };
  \end{tikzpicture}\\
2 & \begin{tikzpicture}[>=latex',line join=bevel,scale=.3]
    {
         \tikzstyle{every node}=[draw, circle, inner sep=1pt, minimum size=.1cm]
         \node (0) at (6.123233995736766e-17,1.0) {};\node (1) at (-0.9510565162951535,0.3090169943749475) {};\node (2) at (-0.5877852522924732,-0.8090169943749473) {};\node (3) at (0.5877852522924729,-0.8090169943749476) {};\node (4) at (0.9510565162951536,0.3090169943749472) {};
         \draw (0) -- (2); \draw (0) -- (3); \draw (0) -- (4); \draw (1) -- (3); \draw (1) -- (4); \draw (2) -- (4); \draw (3) -- (4);
    };
  \end{tikzpicture}\quad\begin{tikzpicture}[>=latex',line join=bevel,scale=.3]
    {
         \tikzstyle{every node}=[draw, circle, inner sep=1pt, minimum size=.1cm]
         \node (0) at (6.123233995736766e-17,1.0) {};\node (1) at (-0.9510565162951535,0.3090169943749475) {};\node (2) at (-0.5877852522924732,-0.8090169943749473) {};\node (3) at (0.5877852522924729,-0.8090169943749476) {};\node (4) at (0.9510565162951536,0.3090169943749472) {};
         \draw (0) -- (2); \draw (0) -- (3); \draw (0) -- (4); \draw (1) -- (3); \draw (1) -- (4); \draw (2) -- (4);
    };
  \end{tikzpicture}\quad\begin{tikzpicture}[>=latex',line join=bevel,scale=.3]
    {
         \tikzstyle{every node}=[draw, circle, inner sep=1pt, minimum size=.1cm]
         \node (0) at (6.123233995736766e-17,1.0) {};\node (1) at (-0.9510565162951535,0.3090169943749475) {};\node (2) at (-0.5877852522924732,-0.8090169943749473) {};\node (3) at (0.5877852522924729,-0.8090169943749476) {};\node (4) at (0.9510565162951536,0.3090169943749472) {};
         \draw (0) -- (2); \draw (0) -- (3); \draw (0) -- (4); \draw (1) -- (4); \draw (2) -- (3); \draw (2) -- (4);
    };
  \end{tikzpicture}\quad\begin{tikzpicture}[>=latex',line join=bevel,scale=.3]
    {
         \tikzstyle{every node}=[draw, circle, inner sep=1pt, minimum size=.1cm]
         \node (0) at (6.123233995736766e-17,1.0) {};\node (1) at (-0.9510565162951535,0.3090169943749475) {};\node (2) at (-0.5877852522924732,-0.8090169943749473) {};\node (3) at (0.5877852522924729,-0.8090169943749476) {};\node (4) at (0.9510565162951536,0.3090169943749472) {};
         \draw (0) -- (2); \draw (0) -- (4); \draw (1) -- (3); \draw (1) -- (4); \draw (2) -- (4);
    };
  \end{tikzpicture}\quad\begin{tikzpicture}[>=latex',line join=bevel,scale=.3]
    {
         \tikzstyle{every node}=[draw, circle, inner sep=1pt, minimum size=.1cm]
         \node (0) at (6.123233995736766e-17,1.0) {};\node (1) at (-0.9510565162951535,0.3090169943749475) {};\node (2) at (-0.5877852522924732,-0.8090169943749473) {};\node (3) at (0.5877852522924729,-0.8090169943749476) {};\node (4) at (0.9510565162951536,0.3090169943749472) {};
         \draw (0) -- (3); \draw (0) -- (4); \draw (1) -- (4); \draw (2) -- (4);
    };
  \end{tikzpicture}\quad\begin{tikzpicture}[>=latex',line join=bevel,scale=.3]
    {
         \tikzstyle{every node}=[draw, circle, inner sep=1pt, minimum size=.1cm]
         \node (0) at (6.123233995736766e-17,1.0) {};\node (1) at (-0.9510565162951535,0.3090169943749475) {};\node (2) at (-0.5877852522924732,-0.8090169943749473) {};\node (3) at (0.5877852522924729,-0.8090169943749476) {};\node (4) at (0.9510565162951536,0.3090169943749472) {};
         \draw (0) -- (2); \draw (0) -- (4); \draw (1) -- (3); \draw (1) -- (4);
    };
  \end{tikzpicture}\quad\begin{tikzpicture}[>=latex',line join=bevel,scale=.3]
    {
         \tikzstyle{every node}=[draw, circle, inner sep=1pt, minimum size=.1cm]
         \node (0) at (6.123233995736766e-17,1.0) {};\node (1) at (-0.9510565162951535,0.3090169943749475) {};\node (2) at (-0.5877852522924732,-0.8090169943749473) {};\node (3) at (0.5877852522924729,-0.8090169943749476) {};\node (4) at (0.9510565162951536,0.3090169943749472) {};
         \draw (0) -- (3); \draw (0) -- (4); \draw (1) -- (4);
    };
  \end{tikzpicture}\quad\begin{tikzpicture}[>=latex',line join=bevel,scale=.3]
    {
         \tikzstyle{every node}=[draw, circle, inner sep=1pt, minimum size=.1cm]
         \node (0) at (6.123233995736766e-17,1.0) {};\node (1) at (-0.9510565162951535,0.3090169943749475) {};\node (2) at (-0.5877852522924732,-0.8090169943749473) {};\node (3) at (0.5877852522924729,-0.8090169943749476) {};\node (4) at (0.9510565162951536,0.3090169943749472) {};
         \draw (0) -- (3); \draw (0) -- (4); \draw (1) -- (4); \draw (3) -- (4);
    };
  \end{tikzpicture}\quad\begin{tikzpicture}[>=latex',line join=bevel,scale=.3]
    {
         \tikzstyle{every node}=[draw, circle, inner sep=1pt, minimum size=.1cm]
         \node (0) at (6.123233995736766e-17,1.0) {};\node (1) at (-0.9510565162951535,0.3090169943749475) {};\node (2) at (-0.5877852522924732,-0.8090169943749473) {};\node (3) at (0.5877852522924729,-0.8090169943749476) {};\node (4) at (0.9510565162951536,0.3090169943749472) {};
         \draw (0) -- (3); \draw (0) -- (4); \draw (1) -- (3); \draw (1) -- (4); \draw (2) -- (4);
    };
  \end{tikzpicture}\quad\begin{tikzpicture}[>=latex',line join=bevel,scale=.3]
    {
         \tikzstyle{every node}=[draw, circle, inner sep=1pt, minimum size=.1cm]
         \node (0) at (6.123233995736766e-17,1.0) {};\node (1) at (-0.9510565162951535,0.3090169943749475) {};\node (2) at (-0.5877852522924732,-0.8090169943749473) {};\node (3) at (0.5877852522924729,-0.8090169943749476) {};\node (4) at (0.9510565162951536,0.3090169943749472) {};
         \draw (0) -- (3); \draw (0) -- (4); \draw (1) -- (3); \draw (1) -- (4); \draw (2) -- (4); \draw (3) -- (4);
    };
  \end{tikzpicture}\quad\begin{tikzpicture}[>=latex',line join=bevel,scale=.3]
    {
         \tikzstyle{every node}=[draw, circle, inner sep=1pt, minimum size=.1cm]
         \node (0) at (6.123233995736766e-17,1.0) {};\node (1) at (-0.9510565162951535,0.3090169943749475) {};\node (2) at (-0.5877852522924732,-0.8090169943749473) {};\node (3) at (0.5877852522924729,-0.8090169943749476) {};\node (4) at (0.9510565162951536,0.3090169943749472) {};
         \draw (0) -- (3); \draw (0) -- (4); \draw (1) -- (3); \draw (2) -- (4); \draw (3) -- (4);
    };
  \end{tikzpicture}\\

 \bottomrule
\end{tabular}
\caption{The 34 graphs of order 5, classified by automorphism group size}
\label{tab:graphs-by-autom-count}
\end{table}

\subsection{Testing the strategies: $\calF \subset \calF(5)$}

While the almost-random strategy was very effective when searching
for induced universal graphs of \emph{all} graphs of a given order,
it is plausible that it will be less effective if $\calF$ is a strict
subset of $\calF(k)$, and in particular if it includes
neither $I_k$ nor $K_k$.  \Cref{fig:second-experiment-using-sample}
examines this claim.  We generated 50 subsets $\calF$ of $\calF(5)$ by random selection,
with each subset containing exactly half of the elements of $\calF(5)$.
For each of these, we ran $\FuncSty{AllInducedUniversalGraphs}(\calF,9)$
using each of the four strategies.  The `automorphisms' strategy
was the best overall in this case, requiring on average $8\%$, $27\%$, and $52\%$
fewer calls to the subgraph isomorphism function than the edges, almost-random,
and random strategies respectively.

To help us understand why the almost-random
strategy performs poorly for these instances, we connect the four solver runs on
each instance together on the plot with blue lines.  The almost-random
strategy, on the instances where it performs poorly, requires
exactly the same number of subgraph isomorphism calls as the random strategy.
These are the instances in which $\calF$ contains neither $K_5$ nor $I_5$;
thus, the almost-random and random strategies are equivalent.  Notably,
for these instances it is also the case that the automorphisms strategy
tends to outperform the edges strategy.

\begin{figure}[htb]
    \centering

    \includegraphics*[width=0.7\textwidth]{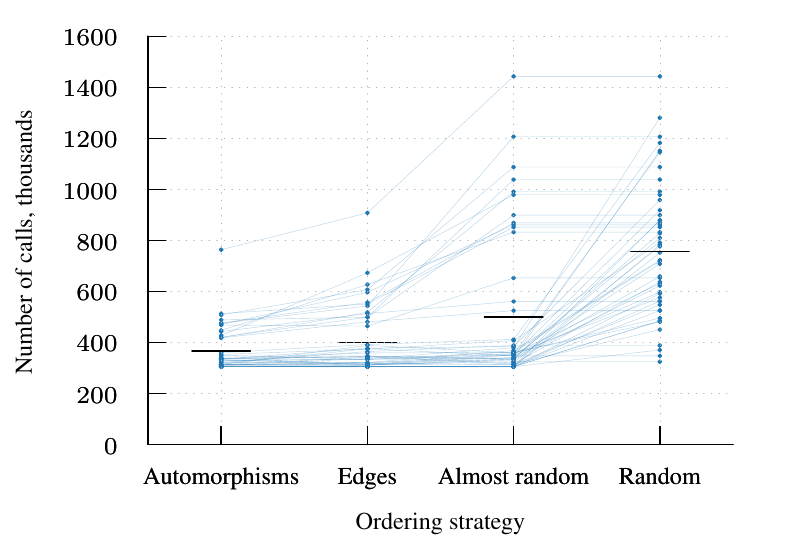}

    \caption{Using each of our four order strategies, we ran
        $\FuncSty{AllInducedUniversalGraphs}(\calF,9)$ 50 times, where
        $\calF$ is a random subset of $\calF(5)$ containing 17 graphs. This
        parallel coordinates plot
        shows the number calls to the subgraph isomorphism algorithm, in thousands.
        Horizontal black lines show average (mean) number of calls.  The four runs for
        each instance are connected by blue lines.}
\label{fig:second-experiment-using-sample}
\end{figure}

\subsection{Testing the strategies: a family of trees}

As a final test case, we let $\calF$ be the family of all trees on 6 vertices,
and searched for all graphs of order 8 that are induced universal for $\calF$.
\Cref{fig:second-experiment-using-trees} shows the results. The edges, almost-random,
and random strategies had identical results, since all members of $\calF$ have five
edges and the family contains neither $K_6$ nor $I_6$.  The automorphisms strategy
requires fewer subgraph isomorphism calls on average than the other three strategies,
although on seven of the 50 instances it does require marginally more calls than
the other strategies.

\begin{figure}[htb]
    \centering

    \includegraphics*[width=0.7\textwidth]{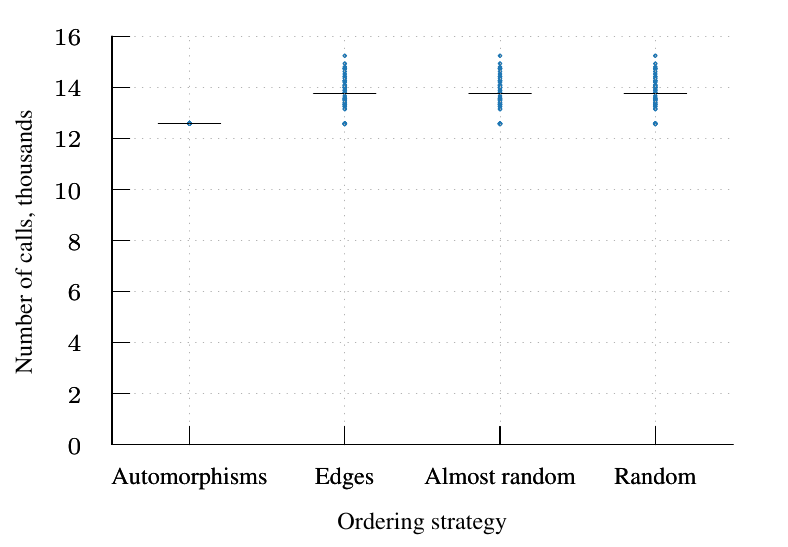}

    \caption{Using each of our four order strategies, we ran
        $\FuncSty{AllInducedUniversalGraphs}(\calF,8)$ 50 times, where
        $\calF$ is the set of all trees of order 6. The plot
        shows the number calls to the subgraph isomorphism algorithm, in thousands.
        Horizontal black lines show average (mean) number of calls.}
\label{fig:second-experiment-using-trees}
\end{figure}

The `automorphisms' strategy performs consistently well across our three test
cases; therefore, we use this strategy for the remainder of the paper.

\section{Results for \texorpdfstring{$k \leq 5$}{k<=5}}\label{sec:results5}

Recall that $\calF(k)$ is the family of all graphs of order $k$, $f(k)$ is
the order of a minimal induced universal graph for
$\calF(k)$, and $F(k)$ is the number of non-isomorphic graphs of that order
that are induced universal for $\calF(k)$.  This section
presents values of $f(k)$ and $F(k)$ for $0 \leq k \leq 5$, computed
using \Cref{alg:brute-force}. The results are shown in 
Table~\ref{tab:graphresults}.  To my knowledge, the value of
$f(5)$ and the values of $F(k)$ have not been published previously.

The values $f(1)$, $f(2)$ and $f(3)$ are equal to to the simple lower bound $2k
- 1$ given in a question by ``Chain Markov'' on Mathematics Stack Exchange
  \cite{math_se_question}.  The values $f(4)$ and $f(5)$ are equal to a lower
  bound given in a comment by ``bof'' on the same question: $f(k) \geq 2k$ if $k
  \geq 4$.  (For $k < 10$, this bound improves upon Moon's lower bound $f(k)
  \leq 2^{(k-1)/2}$.) To briefly summarise the proof, if $f(k) \leq 2k$ then $G$
  must be a split graph (that is, a graph whose vertices can be partitioned
  into a clique and an independent set); therefore, $G$ cannot contain the
  cycle $C_4$ as an induced subgraph.  An example of an 8-vertex induced
  universal graph for the family of 4-vertex graphs was given by ``Chain
  Markov'' as a comment on the same question.

\begin{table}[h!]
\centering
\begin{tabular}{r r r}
 \toprule
 $k$ & $f(k)$ & $F(k)$ \\ [0.5ex]
 \midrule
 0 & 0 & 1 \\
 1 & 1 & 1 \\
 2 & 3 & 2 \\
 3 & 5 & 5 \\
 4 & 8 & 438 \\
 5 & 10 & 22 \\
 \bottomrule
\end{tabular}
\caption{For each $k$, $f(k)$ is the minimum order of a graph containing all $k$-vertex graphs as
induced subgraphs, and $F(k)$ is the number of distinct $f(k)$-vertex graphs that contain
all $k$-vertex graphs as induced subgraphs.}
\label{tab:graphresults}
\end{table}

Figure~\ref{fig:graphs} shows examples of minimal induced universal graphs
for the families of all graphs with four and five vertices
respectively.

\begin{figure}[htb]
    \centering

\begin{tikzpicture}[>=latex',line join=bevel,scale=.4]
  \pgfsetlinewidth{.5bp}
\pgfsetcolor{black}
  \draw [] (44.956bp,147.01bp) .. controls (40.082bp,141.1bp) and (34.42bp,134.24bp)  .. (29.546bp,128.33bp);
  \draw [] (166.49bp,80.229bp) .. controls (168.44bp,90.759bp) and (170.89bp,103.99bp)  .. (172.84bp,114.47bp);
  \draw [] (81.617bp,113.74bp) .. controls (68.067bp,113.87bp) and (49.632bp,114.04bp)  .. (36.095bp,114.17bp);
  \draw [] (117.47bp,117.89bp) .. controls (129.81bp,120.9bp) and (146.19bp,124.9bp)  .. (158.51bp,127.9bp);
  \draw [] (96.884bp,95.599bp) .. controls (94.148bp,78.511bp) and (90.056bp,52.957bp)  .. (87.325bp,35.896bp);
  \draw [] (87.484bp,127.04bp) .. controls (81.59bp,133.51bp) and (74.549bp,141.23bp)  .. (68.67bp,147.68bp);
  \draw [] (56.422bp,85.611bp) .. controls (48.962bp,91.188bp) and (39.886bp,97.973bp)  .. (32.429bp,103.55bp);
  \draw [] (75.094bp,57.147bp) .. controls (76.715bp,50.372bp) and (78.564bp,42.641bp)  .. (80.189bp,35.849bp);
  \draw [] (67.894bp,92.673bp) .. controls (65.43bp,107.47bp) and (61.945bp,128.39bp)  .. (59.482bp,143.18bp);
  \draw [] (89.1bp,72.32bp) .. controls (105.38bp,70.096bp) and (129.09bp,66.857bp)  .. (145.27bp,64.648bp);
  \draw [] (81.628bp,89.246bp) .. controls (84.002bp,92.431bp) and (86.52bp,95.808bp)  .. (88.898bp,99.0bp);
\begin{scope}
  \definecolor{strokecol}{rgb}{0.0,0.0,0.0};
  \pgfsetstrokecolor{strokecol}
  \draw (18.0bp,114.33bp) ellipse (18.0bp and 18.0bp);
\end{scope}
\begin{scope}
  \definecolor{strokecol}{rgb}{0.0,0.0,0.0};
  \pgfsetstrokecolor{strokecol}
  \draw (176.12bp,132.19bp) ellipse (18.0bp and 18.0bp);
\end{scope}
\begin{scope}
  \definecolor{strokecol}{rgb}{0.0,0.0,0.0};
  \pgfsetstrokecolor{strokecol}
  \draw (84.46bp,18.0bp) ellipse (18.0bp and 18.0bp);
\end{scope}
\begin{scope}
  \definecolor{strokecol}{rgb}{0.0,0.0,0.0};
  \pgfsetstrokecolor{strokecol}
  \draw (210.0bp,194.0bp) ellipse (18.0bp and 18.0bp);
\end{scope}
\begin{scope}
  \definecolor{strokecol}{rgb}{0.0,0.0,0.0};
  \pgfsetstrokecolor{strokecol}
  \draw (56.51bp,161.02bp) ellipse (18.0bp and 18.0bp);
\end{scope}
\begin{scope}
  \definecolor{strokecol}{rgb}{0.0,0.0,0.0};
  \pgfsetstrokecolor{strokecol}
  \draw (163.15bp,62.21bp) ellipse (18.0bp and 18.0bp);
\end{scope}
\begin{scope}
  \definecolor{strokecol}{rgb}{0.0,0.0,0.0};
  \pgfsetstrokecolor{strokecol}
  \draw (99.76bp,113.58bp) ellipse (18.0bp and 18.0bp);
\end{scope}
\begin{scope}
  \definecolor{strokecol}{rgb}{0.0,0.0,0.0};
  \pgfsetstrokecolor{strokecol}
  \draw (70.87bp,74.81bp) ellipse (18.0bp and 18.0bp);
\end{scope}
\end{tikzpicture}
\qquad
\qquad
\begin{tikzpicture}[>=latex',line join=bevel,scale=.4]
  \pgfsetlinewidth{.5bp}
\pgfsetcolor{black}
  \draw [] (50.975bp,158.84bp) .. controls (45.582bp,156.23bp) and (39.597bp,153.34bp)  .. (34.222bp,150.74bp);
  \draw [] (84.643bp,161.37bp) .. controls (96.964bp,157.48bp) and (113.5bp,152.27bp)  .. (125.86bp,148.36bp);
  \draw [] (21.443bp,105.03bp) .. controls (20.863bp,111.4bp) and (20.211bp,118.58bp)  .. (19.631bp,124.95bp);
  \draw [] (31.856bp,71.036bp) .. controls (37.972bp,59.88bp) and (46.093bp,45.065bp)  .. (52.198bp,33.928bp);
  \draw [] (31.841bp,102.81bp) .. controls (39.568bp,116.7bp) and (50.773bp,136.84bp)  .. (58.539bp,150.8bp);
  \draw [] (135.92bp,66.785bp) .. controls (137.53bp,83.477bp) and (139.91bp,108.08bp)  .. (141.51bp,124.75bp);
  \draw [] (117.55bp,41.664bp) .. controls (105.65bp,36.69bp) and (89.681bp,30.016bp)  .. (77.737bp,25.024bp);
  \draw [] (146.34bp,62.563bp) .. controls (149.79bp,66.518bp) and (153.54bp,70.819bp)  .. (156.97bp,74.761bp);
  \draw [] (79.976bp,126.94bp) .. controls (66.776bp,130.34bp) and (48.816bp,134.96bp)  .. (35.628bp,138.36bp);
  \draw [] (114.12bp,129.78bp) .. controls (118.2bp,131.62bp) and (122.58bp,133.58bp)  .. (126.67bp,135.42bp);
  \draw [] (114.19bp,114.53bp) .. controls (125.71bp,109.05bp) and (141.01bp,101.78bp)  .. (152.51bp,96.32bp);
  \draw [] (87.424bp,137.42bp) .. controls (84.273bp,142.05bp) and (80.806bp,147.15bp)  .. (77.657bp,151.78bp);
  \draw [] (81.103bp,114.54bp) .. controls (68.744bp,108.69bp) and (51.929bp,100.72bp)  .. (39.582bp,94.871bp);
  \draw [] (105.76bp,106.01bp) .. controls (111.81bp,93.786bp) and (120.04bp,77.152bp)  .. (126.09bp,64.937bp);
  \draw [] (66.194bp,101.08bp) .. controls (55.929bp,109.98bp) and (41.964bp,122.1bp)  .. (31.708bp,131.0bp);
  \draw [] (93.68bp,100.8bp) .. controls (104.16bp,109.69bp) and (118.53bp,121.89bp)  .. (129.11bp,130.87bp);
  \draw [] (75.241bp,71.56bp) .. controls (72.339bp,60.696bp) and (68.624bp,46.796bp)  .. (65.707bp,35.879bp);
  \draw [] (97.955bp,89.017bp) .. controls (113.3bp,88.906bp) and (135.24bp,88.748bp)  .. (150.67bp,88.636bp);
  \draw [] (77.041bp,107.16bp) .. controls (75.021bp,119.7bp) and (72.338bp,136.36bp)  .. (70.321bp,148.89bp);
  \draw [] (61.949bp,88.484bp) .. controls (55.311bp,88.239bp) and (47.792bp,87.962bp)  .. (41.147bp,87.717bp);
  \draw [] (88.59bp,105.38bp) .. controls (88.73bp,105.64bp) and (88.871bp,105.9bp)  .. (89.011bp,106.17bp);
\begin{scope}
  \definecolor{strokecol}{rgb}{0.0,0.0,0.0};
  \pgfsetstrokecolor{strokecol}
  \draw (18.0bp,142.9bp) ellipse (18.0bp and 18.0bp);
\end{scope}
\begin{scope}
  \definecolor{strokecol}{rgb}{0.0,0.0,0.0};
  \pgfsetstrokecolor{strokecol}
  \draw (143.26bp,142.87bp) ellipse (18.0bp and 18.0bp);
\end{scope}
\begin{scope}
  \definecolor{strokecol}{rgb}{0.0,0.0,0.0};
  \pgfsetstrokecolor{strokecol}
  \draw (60.93bp,18.0bp) ellipse (18.0bp and 18.0bp);
\end{scope}
\begin{scope}
  \definecolor{strokecol}{rgb}{0.0,0.0,0.0};
  \pgfsetstrokecolor{strokecol}
  \draw (168.96bp,88.5bp) ellipse (18.0bp and 18.0bp);
\end{scope}
\begin{scope}
  \definecolor{strokecol}{rgb}{0.0,0.0,0.0};
  \pgfsetstrokecolor{strokecol}
  \draw (210.0bp,18.0bp) ellipse (18.0bp and 18.0bp);
\end{scope}
\begin{scope}
  \definecolor{strokecol}{rgb}{0.0,0.0,0.0};
  \pgfsetstrokecolor{strokecol}
  \draw (67.44bp,166.8bp) ellipse (18.0bp and 18.0bp);
\end{scope}
\begin{scope}
  \definecolor{strokecol}{rgb}{0.0,0.0,0.0};
  \pgfsetstrokecolor{strokecol}
  \draw (23.08bp,87.05bp) ellipse (18.0bp and 18.0bp);
\end{scope}
\begin{scope}
  \definecolor{strokecol}{rgb}{0.0,0.0,0.0};
  \pgfsetstrokecolor{strokecol}
  \draw (134.17bp,48.61bp) ellipse (18.0bp and 18.0bp);
\end{scope}
\begin{scope}
  \definecolor{strokecol}{rgb}{0.0,0.0,0.0};
  \pgfsetstrokecolor{strokecol}
  \draw (97.65bp,122.39bp) ellipse (18.0bp and 18.0bp);
\end{scope}
\begin{scope}
  \definecolor{strokecol}{rgb}{0.0,0.0,0.0};
  \pgfsetstrokecolor{strokecol}
  \draw (79.94bp,89.15bp) ellipse (18.0bp and 18.0bp);
\end{scope}
\end{tikzpicture}
\caption{Examples of minimal induced universal graphs for $\calF(4)$ and $\calF(5)$}
\label{fig:graphs}
\end{figure}

\section{\texorpdfstring{$f(6) = 14$}{f(6)=14}}\label{sec:f6}

This section shows that $f(6) = 14$.  We begin with the lower bound.
For $k \geq 6$, we can increase by 2 the lower bound --- $f(k) \geq 2k$
for $k \geq 4$ --- cited in the previous section.

\begin{proposition}\label{f6proposition}
    $f(k) \geq 2k + 2$ for all $k \geq 6$.
\end{proposition}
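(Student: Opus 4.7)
My plan is a proof by contradiction that extends the split-graph argument behind the bound $f(k) \geq 2k$ cited in \Cref{sec:results5}. Suppose, for some $k \geq 6$, that $G$ is an $n$-vertex graph that is induced universal for $\calF(k)$ with $n \leq 2k+1$. I would pick a $k$-clique $K \subseteq V(G)$ and a $k$-independent set $I \subseteq V(G)$; since no two vertices can be both adjacent and non-adjacent, $|K \cap I| \leq 1$, so $|K \cup I| \geq 2k-1$ and the set $S := V(G) \setminus (K \cup I)$ of ``extra'' vertices has $|S| \leq 2$.

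The key structural observation I would establish first is that $V(G) \setminus K = (I \setminus K) \cup S$, where $I \setminus K$ is an independent set. Any triangle in the induced subgraph $G[V(G) \setminus K]$ therefore contains at most one vertex from $I \setminus K$, and hence at least two vertices from $S$. In particular, if $|S| \leq 1$ then $G[V(G) \setminus K]$ is triangle-free, while if $|S| = 2$---writing $S = \{u, v\}$---every triangle in $G[V(G) \setminus K]$ has the form $\{u, v, z\}$ for some $z \in I \setminus K$, and so $uv \in E(G)$.

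The contradiction would then be pinned down using the obstruction graph $H := 2K_3 + I_{k-6}$, which for $k \geq 6$ belongs to $\calF(k)$ and so must appear as an induced subgraph of $G$. The embedding of $H$ provides two vertex-disjoint triangles $T_1, T_2$ in $G$ with no edges between them; two distinct $K$-vertices are always adjacent, so $T_1$ and $T_2$ cannot both meet $K$, and therefore at least one of them---say $T_2$---lies entirely in $V(G) \setminus K$. By the observation above, this forces $|S| = 2$ and $uv \in E(G)$. Since $\calF(k)$ is closed under complementation, $\overline{G}$ is also induced universal, and the same argument applied to $\overline{G}$---in which $I$ is now a $k$-clique, $K$ is a $k$-independent set, and $S = \{u, v\}$ is unchanged---produces a triangle $\{u, v, z'\}$ in $\overline{G}$ and hence $uv \in E(\overline{G})$, i.e., $uv \notin E(G)$. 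This contradiction completes the argument.

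The main difficulty I anticipate is not the edge-contradiction step itself but identifying $2K_3$ (equivalently, its complement $K_{3,3}$) as the correct obstruction: both it and its complement must embed in $G$, and their combined constraints on the extra vertices must be inconsistent. The hypothesis $k \geq 6$ enters precisely here, as $2K_3$ has six vertices; for smaller $k$ one cannot pad it with isolated vertices to obtain a member of $\calF(k)$, consistent with $f(4) = 8$ and $f(5) = 10$ both attaining the weaker bound $2k$. Once the obstruction is fixed, the ``at least two vertices from $S$ in every relevant triangle'' count disposes of all four subcases ($n \in \{2k, 2k+1\}$, $|K \cap I| \in \{0, 1\}$) uniformly.
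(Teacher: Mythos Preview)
Your proof is correct and follows essentially the same approach as the paper's: partition $V(G)$ into a $k$-clique, a $k$-independent set, and at most two leftover vertices; use $2K_3$ to force the leftover pair to be adjacent; then apply the same argument to $\overline{G}$ (equivalently, use $K_{3,3}$) to force the pair to be non-adjacent. Your write-up is slightly more explicit than the paper's in padding $2K_3$ to $2K_3 + I_{k-6}$ so that the obstruction lies in $\calF(k)$ for all $k \geq 6$, but the underlying argument is identical.
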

\begin{proof}

    Suppose that $G$ is an induced universal graph for the family of all graphs
    on $k$ vertices, and that $G$ has no more than $2k + 1$ vertices.  Graphs
    $K_k$ and $I_k$ are both members of $\calF(k)$, and therefore must
    be induced sugraphs of $G$.  Clearly, this clique and
    independent set may overlap by no more than one vertex, so their union must
    contain at least $2k - 1$ vertices.  Therefore it is possible to partition
    the vertices of $G$ into three sets: a clique $S_1$, an independent set
    $S_2$, and a third set $S_3$ containing at most 2 vertices.

    We will show that $G$ cannot contain as induced subgraphs both
    of the graphs in Figure~\ref{fig:boundproof}.  These graphs are complements
    of each other, and we refer to them as $H$ and $H'$ respectively.

\begin{figure}[htb]
    \centering
\begin{tikzpicture}[>=latex',line join=bevel]
  \tikzstyle{every node}=[draw, circle, inner sep=1pt, minimum size=.5cm]
  \pgfsetlinewidth{.5bp}
  \node at (0,2.9) (1) {};
  \node at (0,2) (2) {};
  \node at (0,1.1) (3) {};
  \node at (1.1,2.9) (4) {};
  \node at (1.1,2) (5) {};
  \node at (1.1,1.1) (6) {};
  \draw (1) -- (2) -- (3);
  \draw (3) to [out=140,in=220] (1);
  \draw (4) -- (5) -- (6);
  \draw (6) to [out=140,in=220] (4);
\end{tikzpicture}
\qquad \qquad
\begin{tikzpicture}[>=latex',line join=bevel]
  \tikzstyle{every node}=[draw, circle, inner sep=1pt, minimum size=.5cm]
  \pgfsetlinewidth{.5bp}
  \node at (0,2.9) (1) {};
  \node at (0,2) (2) {};
  \node at (0,1.1) (3) {};
  \node at (1.1,2.9) (4) {};
  \node at (1.1,2) (5) {};
  \node at (1.1,1.1) (6) {};
  \draw (1) -- (4);
  \draw (1) -- (5);
  \draw (1) -- (6);
  \draw (2) -- (4);
  \draw (2) -- (5);
  \draw (2) -- (6);
  \draw (3) -- (4);
  \draw (3) -- (5);
  \draw (3) -- (6);
\end{tikzpicture}
\caption{If $k > 5$ and $G$ is a graph on $2k + 1$ or fewer vertices, then
$G$ cannot have as induced subgraphs all four of the following graphs: the
clique $K_k$, the independent set $I_k$, and
the two graphs shown. See the the proof of Proposition~\ref{f6proposition}.}
\label{fig:boundproof}
\end{figure}

    Let $G_1$ be an induced
    subgraph of $G$ that is isomorphic to $H$.  Since there are
    no edges between the two three-vertex cliques in $G_1$, it must be the case that the
    vertex set of at least one of these cliques does not intersect $S_1$.
    Since $S_2$ is an independent set in $G$, this clique must have exactly
    one vertex in $S_2$ and two vertices in $S_3$.  We can deduce, then, that
    $S_3$ contains exactly two vertices, and that these vertices are adjacent in $G$.

    Now consider graph $H'$.  Since $H'$ is an induced subgraph of $G$, it
    follows by taking complements of $H'$ and $G$ that $H$ is an induced
    subgraph of $G'$ (the complement of $G$).  We can repeat the argument
    of the previous paragraph with the roles of $S_1$ and $S_2$ reversed to
    show that the two vertices in $S_3$ must be adjacent in the complement of
    $G$, and therefore must not be adjacent in $G$.  Since we previously showed that
    these vertices are adjacent in $G$, we have a contradiction.
\end{proof}

\Cref{f6proposition} implies that $f(6) \geq 14$.

To find an induced universal graph for $\calF(6)$ in order to give an upper bound
on $f(6)$, we use the following local search
algorithm, which is shown in \Cref{alg:heuristic}.
The entry point to the algorithm is function $\FuncSty{FindInducedUniversalGraph}()$,
which takes as its parameter the maximum number of attempted improvement steps to
take before restarting from scratch.  The main search function, which is called
until timeout, is $\FuncSty{FindInducedUniversalGraph'}()$.

\begin{algorithm}[h!]
\DontPrintSemicolon
\nl $\FuncSty{FlipEdge}(G, v, w)$  \;
\nl \Begin{
\nl      \If {$G$ has edge $\{v,w\}$}{
    \nl           $\KwSty{return}$~$(V(G), E(G) \setminus \{v, w\})$ \;
}
\nl \Else {
    \nl           $\KwSty{return}$~$(V(G), E(G) \cup \{v, w\})$ \;
}
}
\medskip
\nl $\FuncSty{Score}(G)$  \;
\nl \Begin{
    \nl $\KwSty{return}$~$|\{H \in \calF(6) \mid H \textrm{ is an induced subgraph of } G\}|$ \;
}
\medskip
    \nl $\FuncSty{FindInducedUniversalGraph'}(\AlgVar{maxIter})$  \;
\nl \Begin{
\nl    $G \gets (\{0, \dots, 13\}, \emptyset)$ \LeftComment{A graph on 14 vertices with no edges}\;
\nl    Add edges to make vertices $\{0, \dots, 5\}$ of $G$ a clique \;
\nl    Add to $G$ each possible edge marked $\boldsymbol{\cdot}$ in \Cref{fig:heuristic-regions} with probability $1/2$ \;
    \nl    \For {$i \in \{1, \dots \AlgVar{maxIter}\}$\label{ImproveLoop}}{
\nl      $v, w \gets$ vertices corresponding to a randomly-selected position marked $\boldsymbol{\cdot}$ in \Cref{fig:heuristic-regions} \;
    \nl      $G' \gets \FuncSty{FlipEdge}(G, v, w)$ \label{FlipStep} \;
    \nl      \lIf {$\FuncSty{Score}(G') = |\calF(6)|$}{$\KwSty{return}$~$G'$ \LeftComment{Success!}}
    \nl      \lIf {$\FuncSty{Score}(G') \geq \FuncSty{Score}(G)$}{$G \gets G'$\label{ImproveLoopEnd}}
       }
\nl   $\KwSty{return}$~$\AlgVar{null}$ \;
    }
\medskip
    \nl $\FuncSty{FindInducedUniversalGraph}(\AlgVar{maxIter})$  \;
\nl \Begin{
\nl    \While {time limit is not exceeded}{
    \nl    $G \gets \FuncSty{FindInducedUniversalGraph'}(\AlgVar{maxIter})$ \;
    \nl    \lIf {$G \not= \AlgVar{null}$}{$\KwSty{return}$~$G$}
       }
\nl   $\KwSty{return}$~$\AlgVar{null}$ \;
    }
\caption{A hill-climbing algorithm to find an induced universal graph for family $\calF(6)$}
\label{alg:heuristic}
\end{algorithm}

The function $\FuncSty{FindInducedUniversalGraph'}()$
begins by generating a random graph $G$ on 14 vertices as follows
(see \Cref{fig:heuristic-regions}).
Numbering the vertices from zero, we make the
$k$ vertices numbered $0$ to $k-1$ a clique, and the $k$ vertices numbered $k-1$ to $2k-2$ an
independent set.\footnote{The
clique and the independent set thus have one vertex
in common.  The proof of Proposition~\ref{f6proposition} can be modified straightforwardly
to show that there is no 14-vertex induced universal graph for this family of graphs
that contains a 6-vertex clique and a 6-vertex independent set as
induced subgraphs with disjoint vertex sets.}

\begin{figure}[h!]
    \centering
    \footnotesize
       \begin{tikzpicture}[>=latex',line join=bevel,scale=.4]
        \tikzstyle{every node}=[]
          \input{heuristic-regions}
      \end{tikzpicture}
    \caption{The adjacency matrix used in our heuristic algorithm when searching
        for an induced universal graph of order 14 for $\calF(6)$.  The blue
        region (a 6-vertex clique) and the yellow region (a 6-vertex independent
        set) remain unchanged as the algorithm runs.  The remaining possible edges,
        marked $\boldsymbol{\cdot}$, are initially set to 0 or 1 at random,
        and are flipped between 0 and 1 as the algorithm progresses.}
\label{fig:heuristic-regions}
\end{figure}

Each possible edge that is not involved in either the clique or the independent
set --- shown in the figure with a dot --- is added with probability $1/2$.
The loop in \linerangeref{ImproveLoop}{ImproveLoopEnd} then carries out up to
$\AlgVar{maxIter}$ iterations of a step that randomly modifies graph $G$,
and accepts the modification if it causes the number of graphs in $\calF(6)$ that
are induced subgraphs of $G$ to increase.
The attempted-improvement step chooses a random pair of vertices $\{v, w\}$,
both of which are not in the same coloured square
in \Cref{fig:heuristic-regions}, and
``flips'' the status of the edge between these two vertices: the edge is added if it
is not present, and removed otherwise.
After each flip, we count the number of graphs on 6 vertices (from a total
of 156 graphs) that are
isomorphic to an induced subgraph of our 14-vertex graph.  If the most recent flip
increased this number or left it unchanged, we accept the modification
to the graph.  After $\AlgVar{maxIter}$
flips, we restart the algorithm with a new randomly-generated graph.

Our implementation has two optimisations that are not shown in \Cref{alg:heuristic}.
First, scores are cached in a dictionary data structure, so that we can quickly look
up the score of a graph that has already been visited. This data structure is
cleared on return from $\FuncSty{FindInducedUniversalGraph'}$, in order to
bound the program's memory use.
(An alternative approach, which we have not implemented but which
may be useful in order to explore more of the space of possible graphs, would be to
use a tabu list \cite{DBLP:books/daglib/0093574} to avoid revisiting graphs
that have been visited recently.)

The second optimisation often avoids the need to visit every graph in $\calF(6)$
during calls to $\FuncSty{Score}()$.  Rather than beginning our count at 0
and incrementing it for every graph in $\calF(6)$ that is an induced subgraph
of $G$, we begin our count at $|\calF(6)|$ and decrement it for every
graph that is \emph{not} an induced subgraph of $G$.  We can then return
early from the function with a score of $-1$ as soon as the counter variable
falls below the score obtained by the graph prior to the most recent edge-flip.
This does not affect the behaviour of the hill-climbing search, since any
score below the previous graph's score results in the most recent change being rejected.

By running this search algorithm, we quickly found the graph of order 14 whose adjacency
matrix is shown in \Cref{fig:adjmat14}.  This is induced universal for $\calF(6)$;
therefore, $f(6) \leq 14$.  Combining our two bounds, we have $f(6) = 14$.

\begin{figure}[htb]
\centering
\scriptsize
\verb|0 1 1 1 1 1 0 1 1 0 0 0 1 0| \\
\verb|1 0 1 1 1 1 1 1 1 1 1 0 0 1| \\
\verb|1 1 0 1 1 1 0 1 1 0 1 0 1 0| \\
\verb|1 1 1 0 1 1 1 0 0 0 1 1 0 0| \\
\verb|1 1 1 1 0 1 0 1 0 0 0 0 0 0| \\
\verb|1 1 1 1 1 0 0 0 0 0 0 1 0 1| \\
\verb|0 1 0 1 0 0 0 0 0 0 0 1 1 1| \\
\verb|1 1 1 0 1 0 0 0 0 0 0 1 1 0| \\
\verb|1 1 1 0 0 0 0 0 0 0 0 0 1 0| \\
\verb|0 1 0 0 0 0 0 0 0 0 0 1 0 1| \\
\verb|0 1 1 1 0 0 0 0 0 0 0 1 1 0| \\
\verb|0 0 0 1 0 1 1 1 0 1 1 0 0 1| \\
\verb|1 0 1 0 0 0 1 1 1 0 1 0 0 0| \\
\verb|0 1 0 0 0 1 1 0 0 1 0 1 0 0|
\caption{The adjacency matrix of a 14-vertex induced universal graph for the family of all
six-vertex graphs}
\label{fig:adjmat14}
\end{figure}

\section{Bounds on \texorpdfstring{$f(7)$}{f(7)}}\label{sec:f7}

By Proposition~\ref{f6proposition}, we have $f(7) \geq 16$.  Figure~\ref{fig:adjmat18}
shows the adjacency matrix of an 18-vertex induced universal
graph for the family of all seven-vertex graphs. This was generated with 
the heuristic described in Section~\ref{sec:f6}, with two modifications.
First, 10,000 rather than 1000
edge-flips were permitted before each restart, as this was found to be more effective
in a preliminary run of the experiment.  Second, the overlapping six-vertex clique
and independent set were replaced with a clique on seven vertices and an independent
set on seven vertices.  Again, these had one vertex in common.  (We also tried making
the clique and independent set vertex-disjoint, but did not find an 18-vertex solution
in four hours with this approach.)

Thus we have $16 \leq f(7) \leq 18$.

\begin{figure}[htb]
\centering
\scriptsize
\verb|0 1 1 1 1 1 1 1 1 1 0 1 1 1 1 1 0 0| \\
\verb|1 0 1 1 1 1 1 0 1 1 1 0 1 1 0 1 0 0| \\
\verb|1 1 0 1 1 1 1 1 0 1 0 1 0 0 0 0 0 1| \\
\verb|1 1 1 0 1 1 1 0 1 0 0 1 0 0 0 0 0 1| \\
\verb|1 1 1 1 0 1 1 1 1 0 0 0 0 0 1 0 0 0| \\
\verb|1 1 1 1 1 0 1 0 1 0 0 0 0 1 1 1 1 1| \\
\verb|1 1 1 1 1 1 0 0 0 0 0 0 0 1 0 0 0 1| \\
\verb|1 0 1 0 1 0 0 0 0 0 0 0 0 0 1 1 1 0| \\
\verb|1 1 0 1 1 1 0 0 0 0 0 0 0 1 1 1 0 1| \\
\verb|1 1 1 0 0 0 0 0 0 0 0 0 0 0 1 1 1 1| \\
\verb|0 1 0 0 0 0 0 0 0 0 0 0 0 1 0 0 1 0| \\
\verb|1 0 1 1 0 0 0 0 0 0 0 0 0 0 1 1 1 0| \\
\verb|1 1 0 0 0 0 0 0 0 0 0 0 0 0 0 1 1 1| \\
\verb|1 1 0 0 0 1 1 0 1 0 1 0 0 0 0 0 0 1| \\
\verb|1 0 0 0 1 1 0 1 1 1 0 1 0 0 0 0 0 1| \\
\verb|1 1 0 0 0 1 0 1 1 1 0 1 1 0 0 0 1 1| \\
\verb|0 0 0 0 0 1 0 1 0 1 1 1 1 0 0 1 0 1| \\
\verb|0 0 1 1 0 1 1 0 1 1 0 0 1 1 1 1 1 0|
\caption{The adjacency matrix of an 18-vertex induced universal graph for the family of all
seven-vertex graphs}
\label{fig:adjmat18}
\end{figure}

\section{Trees}\label{sec:trees}

To generate induced universal graphs for families of
all trees on $k$ vertices, we tried two approaches.
The first approach was to run \Cref{alg:brute-force}, with the family
$\calF$ of order-$k$ trees loaded from
a list published by Brendan
McKay.\footnote{\url{https://users.cecs.anu.edu.au/~bdm/data/trees.html}}
The second approach, described in the following subsection, generates
only candidate graphs that contain a star --- one of the elements
of $\calF$ ---as an induced subgraph.  The final part of this section
gives results generated using these two methods, along with additional
upper bounds generated using a specialised version of our hill-climbing
algorithm.

\subsection{The completion method}

In our second approach to finding induced universal graphs for the
family of all trees on $k$ vertices, we embed the star $S_{k-1}$ --- which must be
present since it is a tree on $k$ vertices --- in the top left position
of the adjacency matrix, then systematically try all possible ways to complete
the adjacency matrix.  For each completed adjacency matrix, we test whether
the graph contains as an induced subgraph each tree in our family.
If it does, and if the graph is not isomorphic to any induced universal
graph that has already been found, we add it to our collection of graphs.
We will refer to this as the ``naive completion'' method.

Because the naive completion method has to test a huge number of graphs,
many pairs of which are isomorphic, we created an improved version of the
algorithm, which we will refer to as the ``symmetry breaking completion''
method.  Our description refers to the adjacence matrix in
\Cref{fig:regions-for-trees}.
This adds two symmetry-breaking constraints on the adjacency matrix.
The first of these constraints concerns the grey shaded region at the bottom-right
of the adjacency matrix, which has $n - k$ rows and columns.  Rather than
trying all $2^{n-k \choose 2}$ possible entries for this region of the
adjacency matrix, we use a list containing a canonical adjacency matrix
for each possible graph of order $n-k$, and we require that the shaded
region be equal to one of these adjacency matrices.

\begin{figure}[h!]
    \centering
    \footnotesize
       \begin{tikzpicture}[>=latex',line join=bevel,scale=.6]
        \tikzstyle{every node}=[]
          \input{regions-for-trees}
      \end{tikzpicture}
    \caption{The adjacency-matrix regions used in the 
        ``symmetry breaking completion'' method. For this example, $n=11$ and $k=7$.}
\label{fig:regions-for-trees}
\end{figure}

The second symmetry-breaking constraint concerns the yellow shaded region
in \Cref{fig:regions-for-trees}. Each row of this region has $n-k$ entries,
named $x_{i1}, \dots, x_{i,n-k}$.  For our symmetry-breaking constraint, we
view each row as the binary number $x_{i1}, \dots, x_{i,n-k}$
(with $x_{i,n-k}$ as the least significant digit), and we insist
that these numbers are in nondecreasing order; thus
$x_{21}, \dots, x_{2,n-k} \leq \dots \leq x_{k1}, \dots, x_{k,n-k}$.
In other words, we impose a lexicographic ordering constraint
on the rows of this region.

The algorithm with these symmetry-breaking constraints is shown
in \Cref{alg:completion}.  The call to $\FuncSty{MakeGraph}()$ on
\lineref{MakeGraphLine} constructs a graph by building its
adjacency matrix as shown in \Cref{fig:regions-for-trees}.  The
first $k$ vertices are connected in the form of a star;
for each $i$ such that $1 \leq i \leq j$, the digits of the binary
representation of $x_i$
are assigned to $x_{i1}, \dots, x_{i,n-k}$, and these values
are placed in the adjacency matrix as shown in the figure.  Finally,
the small adjacency matrix $M$ is copied to the bottom-right of
the adjacency matrix of the constructed graph.

For the collection of graphs $\calG$ defined on \lineref{DefineSetG},
isomorphic graphs are considered identical.  Thus, we only add a graph
to $\calG$ if it is not isomorphic to any existing member of the
collection.\footnote{An efficient method to ensure that isomorphic
graphs are not stored in $\calG$ would be label the vertices
of a graph $G$ canonically before adding it to the collection
\cite{DBLP:journals/jsc/McKayP14}, and to store the collection as a
hash set.  Because this part of the program is not a performance
bottleneck, and to miminise software dependencies, our implementation
uses the simpler approach of testing whether $G$ is isomorphic
to each existing member of $\calG$, in turn.}

\begin{algorithm}[h!]
\DontPrintSemicolon
\nl $\FuncSty{Search}(n,k,(x_1, \dots, x_j))$ \;
\nl \KwData{Natural numbers $n$ and $k$, and a sequence
    $(x_1, \dots, x_j))$ of nonnegative integers such that the binary
    representation of each $x_i$ represents entries $x_{i1}, \dots, x_{i,n-k}$
    of an adjacency matrix, as shown in \Cref{fig:regions-for-trees}}
\nl \KwResult{The set of all order-$n$ graphs that are induced universal
    for the family of trees of order $k$, and have an adjacency matrix
    given by $\FuncSty{MakeGraph}(n,k,X,M)$ for some completion
    $X$ of $(x_1, \dots, x_j)$ and some $M \in \mathcal{M}(n-k)$}
\nl \Begin{
\nl    $\calG \gets \emptyset$ \label{DefineSetG} \;
\nl    \If {$j \leq 1$}{
\nl      \For {$i \in \{0, \dots, 2^{n-k} - 1\}$}{
\nl        $\calG \gets \calG \cup \FuncSty{Search}(n,p,(x_1, \dots, x_j, i))$
  }
}
\nl    \ElseIf {$j \leq k$}{
\nl      \For {$i \in \{0, \dots, x_j\}$}{
\nl        $\calG \gets \calG \cup \FuncSty{Search}(n,p,(x_1, \dots, x_j, i))$
  }
}
\nl    \Else {
\nl      \For {$M \in \mathcal{M}(n-k)$}{
\nl        $G \gets \FuncSty{MakeGraph}(n, k, (x_1, \dots, x_j), M)$ \label{MakeGraphLine}\;
\nl        \If {$G$ contains every tree of order $k$ as an induced subgraph}{
\nl          $\calG \gets \calG \cup \{G\}$ \;
  }
}
}
\nl    $\KwSty{return}$~$\calG$ \;
    }
\medskip
\nl $\FuncSty{FindAllInducedUniversalGraphsForTrees}(n,k)$ \;
\nl \KwData{Natural numbers $n$ and $k$ such that $k<n$}
\nl \KwResult{The set of all graphs of order $n$ that are induced universal for
    the family of trees of order $k$.}
\nl \Begin{
\nl    $\KwSty{return}$~$\FuncSty{Search}(n,k,())$ \;
    }
\caption{The symmetry-breaking completion algorithm for finding all graphs of order $n$
    that are induced universal for the family of all order-$k$ trees}
\label{alg:completion}
\end{algorithm}

We claim that although the symmetry breaking completion method does not
visit every adjacency matrix visited by the naive completion method,
it finds, up to isomorphism, all induced universal graphs for the family
of trees on $k$ vertices.  This is shown by the following proposition.

\begin{proposition}
    Let $n, k$ be given, and let $G$ be a graph on $n$ vertices that is
    induced universal for the family of trees of order $k$.  Let $\mathcal{M}$
    be a set of canonical adjacency matrices for $\calF(n-k)$.  There is
    a graph on $n$ vertices numbered $1, \dots, n$ that is isomorphic
    to $G$ and satisfies the two symmetry-breaking constraints described
    above.
\end{proposition}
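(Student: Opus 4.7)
The plan is to exhibit an explicit relabelling of the vertices of $G$ that produces an isomorphic copy satisfying both constraints, obtained as a composition of three stages that do not interfere with one another.

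First, since the star $S_{k-1}$ is a tree on $k$ vertices, it lies in the family for which $G$ is induced universal, so $G$ contains an induced copy of $S_{k-1}$. I would label the vertex playing the role of the centre as $1$ and the vertices playing the roles of the leaves as $2,\dots,k$ (in arbitrary order for now), and label the remaining $n-k$ vertices as $k+1,\dots,n$ in arbitrary order. After this stage, the top-left $k\times k$ block of the adjacency matrix has exactly the star pattern shown in \Cref{fig:regions-for-trees}.

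Second, I would permute the labels $k+1,\dots,n$ to put the bottom-right $(n-k)\times(n-k)$ block into canonical form. The induced subgraph on $\{k+1,\dots,n\}$ belongs to some isomorphism class in $\calF(n-k)$, and by definition of $\mathcal{M}$ there is a labelling of these vertices whose adjacency matrix is the canonical representative $M\in\mathcal{M}(n-k)$ of that class; relabel accordingly. This relabelling only permutes rows and columns indexed by $\{k+1,\dots,n\}$, so it leaves the top-left star block untouched; the yellow region is transformed by a column permutation (and a corresponding row permutation in its transpose), but no symmetry-breaking condition has yet been imposed on it.

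Third, I would permute the labels $\{2,\dots,k\}$ so that the rows of the yellow region, read as binary numbers with $x_{i,n-k}$ as the least significant bit, are in nondecreasing order. The key observation enabling this stage is that the vertices $2,\dots,k$ are leaves of the induced star: swapping any two of them is an automorphism of the induced star, so the top-left block remains in the required form. Moreover, this stage only permutes rows and columns indexed by $\{2,\dots,k\}$; since none of these indices lies in the bottom-right block, the canonical matrix $M$ selected in the previous stage is preserved. Any sorting permutation of the rows of the yellow region therefore realises the lexicographic constraint without disturbing anything else, and the resulting graph is isomorphic to $G$ and satisfies both symmetry-breaking constraints.

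The only subtle point is the independence of the three stages, so the main thing to make precise in the write-up is to track which index sets each permutation acts on and to verify that each constraint established in an earlier stage is stabilised by the permutations applied in later stages; once this is checked, the proof is essentially complete.
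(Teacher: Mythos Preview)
Your proposal is correct and follows essentially the same approach as the paper: locate an induced copy of $S_{k-1}$ and label its centre $1$, relabel the remaining $n-k$ vertices so that the bottom-right block is a canonical matrix in $\mathcal{M}$, and finally order the leaves $2,\dots,k$ so that the rows of the yellow region are lexicographically sorted. Your write-up is in fact slightly more careful than the paper's in explicitly tracking which index sets each relabelling acts on and why the stages do not interfere.
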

\begin{proof}
    Since $G$ is induced universal for the family of trees of order $k$, it
    contains the star $S_{k-1}$ as an induced subgraph.  Arbitrarily
    choose one such star in $G$ and give the label 1 to its vertex of degree
    $k-1$.  Its remaining vertices will be given the labels $\{2, \dots, k\}$,
    in an order that will be specified in the next paragraph.
    The remaining $n-k$ vertices of $G$ induce a subgraph isomorphic
    to one of the adjacency matrices in $\mathcal{M}$; call this matrix $M$.
    By choosing an appropriate relabelling of the $n-k$ vertices of $G$,
    the region of the adjacency matrix corresponding to these vertices
    can be made equal to $M$.  Thus, the first symmetry-breaking constraint
    is satisfied.

    The second symmetry-breaking constraint can now be satisfied by assigning
    labels $\{2, \dots, k\}$ to the degree-1 vertices of the star $S_{k-1}$
    in such an order that their adjacencies to the vertices outside the star,
    viewed as binary numbers, are in lexicographic order.
\end{proof}

\subsection{Results for families of trees}

The symmetry-breaking completion method is much faster than \Cref{alg:brute-force}.
For $n=9$ and $k=6$, for example, the completion method takes 9 seconds whereas
the brute-force algorithm takes 46 seconds. The difference in run-times becomes
even greater for larger values of $n$ and $k$.
Using the symmetry-breaking completion method,
we were able to find the values of $t(k)$ and $T(k)$ for $k \leq 7$. We were also
able to show that $t(8) > 12$.  We confirmed
the results for $k \leq 6$ using \Cref{alg:brute-force}.

Table~\ref{tab:treeresults} gives the order $t(k)$ of a minimal induced universal graph for
the family of $k$-vertex trees, and the number $T(k)$ of such graphs.  Figure~\ref{fig:tree-example}
shows one of the 687 minimal induced universal graphs for the family of 7-vertex trees.

\begin{table}[h!]
\centering
\begin{tabular}{r r r}
 \toprule
 $k$ & $t(k)$ & $T(k)$ \\ [0.5ex]
 \midrule
 1 & 1 & 1 \\
 2 & 2 & 1 \\
 3 & 3 & 1 \\
 4 & 5 & 2 \\
 5 & 7 & 18 \\
 6 & 9 & 66 \\
 7 & 11 & 687 \\
 \bottomrule
\end{tabular}
\caption{For each $k$, $t(k)$ is the minimum order of a graph containing all $k$-vertex trees as
induced subgraphs, and $T(k)$ is the number of distinct $t(k)$-vertex graphs that contain
all $k$-vertex trees as induced subgraphs.}
\label{tab:treeresults}
\end{table}

\begin{figure}[htb]
    \centering
    \input{tree-example}
\caption{An induced universal graph for the family of all
    trees on 7 vertices.  In each copy of the graph, one of the
    11 trees of order 7 is highlighted.}.
\label{fig:tree-example}
\end{figure}

To obtain upper bounds on $t(k)$ for larger values of $k$, we use a version of
\Cref{alg:heuristic} with minor modifications.  Instead of initially embedding
a clique and an independent set as in \Cref{fig:heuristic-regions}, we embed
a single tree --- the $k$-vertex star $S_{k-1}$ --- as shown in
\Cref{fig:heuristic-regions-for-trees}.  Each possible edge marked with a dot
in the figure is added with probability 0.1.

\begin{figure}[h!]
    \centering
    \footnotesize
       \begin{tikzpicture}[>=latex',line join=bevel,scale=.4]
        \tikzstyle{every node}=[]
          \input{heuristic-regions-for-trees}
      \end{tikzpicture}
    \caption{The adjacency matrix used in our heuristic algorithm when searching
        for an induced universal graph of order 9 for the set of all trees
        of order 6.  The blue
        region, a star with 6 vertices, remains unchanged as the algorithm runs.
        The remaining possible edges, marked $\boldsymbol{\cdot}$,
        are flipped between 0 and 1 as the algorithm progresses.}
\label{fig:heuristic-regions-for-trees}
\end{figure}

Using this heuristic algorithm, we were able to find induced universal graphs
to demonstrate that $t(8) \leq 13$, $t(9) \leq 15$ and $t(10) \leq 18$.
Since we have matching lower and upper bounds for $t(8)$, we can state
that $t(8) = 13$.

\section{Verification}\label{sec:verification}

To verify correctness of the results in
\Cref{tab:graphresults} and \Cref{tab:treeresults}, we repeated every call to
the McSplit subgraph isomorphism solver using the LAD algorithm \cite{DBLP:journals/ai/Solnon10}
as implemented in the \texttt{igraph} library \cite{igraph}.

We verified that the graphs shown as adjacency matrices in \Cref{fig:adjmat14}
and \Cref{fig:adjmat18} are induced universal graphs for their corresponding families
using a shell script that calls the Glasgow Subgraph Solver \cite{DBLP:conf/cp/McCreeshP15,DBLP:conf/gg/McCreeshP020} to check for the
inclusion of every graph of $k$ vertices.
This script shares no code with the Python program that generated the graphs.

Finally, we verified all but the final row in each of \Cref{tab:graphresults}
and \Cref{tab:treeresults} using a second shell script that calls the Glasgow
Subgraph Solver.  Again, this shares no code with the program used to generate
the results in the tables.

\section{Acknowledgements}

I would like to thank Brendan McKay for helpful feedback on this paper, and Persi
Diaconis for introducing me to induced universal graphs and for interesting email
discussions on related topics.

\bibliographystyle{abbrv}
\bibliography{main}
\end{document}